\documentclass[a4paper]{article}

   \usepackage{amsmath,amssymb,amsthm,amscd}

\topmargin0.3cm
\oddsidemargin0.3cm
\textwidth15cm
\textheight22.7cm

     \frenchspacing

   \newtheorem{Thm}{Theorem}[section]
   \newtheorem{Lem}[Thm]{Lemma}
   \newtheorem{Cor}[Thm]{Corollary}

   \newtheorem{Rem}[Thm]{Remark}

   \makeatletter
   \@addtoreset{equation}{section}
   \makeatother
   
\newcommand{\R}{\mathbb{R}}

\begin{document}

   \title{The Generalized Dock Problem}

   \pagestyle{headings}

   \author{C.-I. Martin}



  \date{}
  \maketitle
  \today

\begin{abstract}
\noindent
We show existence and uniqueness for a linearized water wave problem in a two dimensional domain $G$ with corner,
formed by two semi-axis $\Gamma_1$ and $\Gamma_2$ which intersect
under an angle $\alpha\in (0,\pi ]$. The existence and uniqueness of the solution is proved by considering an auxiliary mixed problem with Dirichlet and Neumann boundary conditions. The latter guarantees the existence of the Dirichlet to Neumann map. The water wave boundary value problem is then shown to be equivalent to an equation like $v_{tt}+g\Lambda v=P$ with initial conditions, where $t$ stands for time, $g$ is the gravitational constant, $P$ means pressure, and $\Lambda$ is the Dirichlet to Neumann map. We then prove that $\Lambda$ is a positive self-adjoint operator.
\end{abstract}

\noindent  {\bf Keywords: water waves; corner domain; Dirichlet to Neumann map.}\\
\noindent {\bf Mathematics Subject Classification: } \\
\noindent {\bf Primary: 35Q35}\\
\noindent {\bf Secondary: 76B15
}\\


\section{Introduction}
This paper considers the wave motion in water with a free surface
and subjected to gravitational and other forces. Namely, a
dock-problem like will be studied. We first give a brief account on
the general theory of surface waves and then continue with the
statement of the dock problem and previous work in this direction.
We now summarize the  fundamental mathematical basis for our later
endeavors by formulating a typical problem which arises in the
hydrodynamics of surface waves. One of the first papers in this field belongs to Lord Rayleigh \cite{Ray}. For a thorough treatment of the
theory of water waves one could consult the books of
Stoker \cite{Stok} and Lamb \cite{Lamb}. See also the paper \cite{Stok1}.\\As for more recent work on water waves we mention the papers
\cite{Cra2, Cra3}.\\
Let us consider the physical situation of an ocean beach. The water
is assumed to be initially at rest occupying the region defined by
the equation
$$-h(x_{1},x_{2})\leq y\leq 0,\,\,x^{s}(x_2,t)\leq x_1 <+\infty,\,-\infty< x_2<+\infty,$$
where $x^{s}(x_2,t)$ is the horizontal coordinate of the water line on the
shore. We assume that at time $t=0$ a disturbance is created on the
surface of the water, and one then wants to determine the subsequent
motion of the water, namely the form of the free surface
$\eta(x_1,x_2; t)$ and the velocity field components $u,v,w$ as
functions of the space variables $x_1, x_2, y$ and the time $t$. We
will also assume all flows to be incompressible and irrotational.
The incompressibility of the flow gives the law of mass conservation
\begin{equation}
\label{masscons} {\rm{div}}\, {\bf v}=u_{x_1}+v_{x_2}+w_y=0,
\end{equation}
where ${\bf v}=(u,v,w)$ denotes the velocity field. Since the flow
is assumed to be irrotational we have that
\begin{equation}
\label{irrot} {\rm{curl}}\, {\bf v}=(w_{x_2}-v_y, u_y-w_{x_1},
v_{x_1}-u_{x_2})=0.
\end{equation}
The fact that ${\rm{curl}}\, {\bf v}=0$ implies the existence of a
single-valued velocity potential $\mathcal{V} (x_1,x_2,y,t)$ in any simple
connected region, i.e.,
\begin{equation}
\label{velpot} {\bf v}=\nabla\mathcal{V}=(\mathcal{V}_{x_1},\mathcal{V}_{x_2},\mathcal{V}_y).
\end{equation}
Equations $\eqref{masscons}$ and $\eqref{velpot}$ give that the
velocity potential $\mathcal{V}$ satisfies the Laplace equation
$$\Delta\mathcal{V} =0.$$

From the irrotational character of the water flow $\eqref{irrot}$ we
obtain the Bernoulli law
\begin{equation}
\label{Bernoulli}
\mathcal{V}_t+\frac{1}{2}(u^{2}+v^{2}+w^{2})+\frac{p}{\rho}+gy=C(t),
\end{equation}
with $C(t)$ depending only on $t$, and not on the space variables.
\\
\\
{\bf Boundary conditions}\\
In the problem under consideration it is assumed that the fluid has
a boundary surface $S$ which has the property that any particle
which is once on the surface remains on it.\\
Assume that $S$ is given by an equation $\xi(x_1, x_2, y, t)=0$.
Differentiation with respect to $t$ gives that the condition
\begin{equation}
\label{surder} \frac{d\xi}{dt}=u\xi_{x_1}+v\xi_{x_2}+w\xi_y+\xi_t=0
\end{equation}
 holds on $S$.
Using relations $\eqref{velpot}$, $\eqref{surder}$ and the fact that
the vector $(\xi_{x_1},\xi_{x_2}, \xi_y)$ is normal to $S$ we obtain
that
\begin{equation}
\label{norderphi}
\frac{\partial\mathcal{V}}{\partial\nu}=-\frac{\xi_t}{\sqrt{\xi_{x_1}^{2}+\xi_{x_2}^{2}+\xi_y^{2}}},
\end{equation}
where $\frac{\partial}{\partial\nu}$ means differentiation in the
direction of the normal to $S$.\\
An important special case is when the boundary $S$ is independent of
the time $t$ (the bottom of the sea for eg.), situation which leads
to the boundary condition
\begin{equation}
\label{tindep} \frac{\partial\mathcal{V}}{\partial\nu}=0\,\,\rm{on}\,\, S
\end{equation}
Another important situation is when the boundary surface $S$ is
given by the equation
\begin{equation}
\label{freesurfaceeq} y=\eta(x_1, x_2, t),
\end{equation}
and the surface is not prescribed apriori. In this case we have
$\xi=y-\eta(x_1, x_2, t)=0$ for any particle, and $\eqref{surder}$
leads to
\begin{equation}
\label{kincond} \mathcal{V}_{x_1}\eta_{x_1}-\mathcal{V}_y+
\mathcal{V}_{x_2}\eta_{x_2}+\eta_t=0\,\,\rm{on}\,\,y=\eta(x_1,x_2,t),
\end{equation}
while the Bernoulli's law gives the condition
\begin{equation}
\label{dyncond} g\eta
+\mathcal{V}_t+\frac{1}{2}\left(\mathcal{V}_{x_1}^{2}+\mathcal{V}_{x_2}^{2}+\mathcal{V}_y^{2}\right)=P(x_1,x_2,y,t)\,\,\rm{on}\,\,y=\eta(x_1,x_2,t),
\end{equation}
where $g$ is the gravitational constant, and $P(x_1,x_2,y,t)$
prescribed over the region of disturbance.
\\
at $t=0$.\\
{\bf Previous work}\\
For the 2 dimensional case when the motion of the free surface is a small perturbation of still water and without surface tension, we refer to Nalimov
\cite{Nal}.\\
In the case of the dock problem the upper surface of the water is constrained by the dock for all $x_2<0$ and is a free surface described by $y=\eta(x_1,x_2,t)$, subject to atmospheric presure for all $x_2>0$.
The standing solution of the homogeneous $(P=0)$ two-dimensional
dock problem has been given by Friedrichs and Lewy \cite{Frie2}, (see also \cite{Frie1}) as
a special case of periodic waves on sloping beaches which behave at
infinity like an arbitrary progressing wave. The general
three-dimensional case of periodic waves cresting on a beach sloping
at any angle $\alpha$ was first considered by Peters \cite{Pet} and
Roseau \cite{Ros}. The case of the three-dimensional dock problem in
water of uniform depth was first solved by Heins \cite{Hei} by means of the
Wiener-Hopf technique, see also Holford \cite{Hol}, Varley \cite{Var1}, \cite{Var2},  Rahimizadeh \cite{Rahi} and the paper \cite{Chak}.\\
{\bf Outline of the paper}\\
Unlike the situation described above, our paper will deal with a
problem in a two dimensional sector with a corner point. Instead of
the space variables $(x_1,x_2,y)$ we will have $(x_1,x_2)$. With
this notation the equation of the free surface
$\eqref{freesurfaceeq}$ now becomes
\begin{equation}
\label{twodimfreesurfaceeq}
x_2 =\eta (x_1,t).
\end{equation}
We shall denote by $G$ the corner domain in $\R^{2}$ formed by the
semi-axis $\Gamma_{1}=\{x_{1}>0, x_{2}=0\}$ and
$\Gamma_{2}=\{y_1=-x_{1}\cos\alpha-x_{2}\sin\alpha<0,
y_{2}=x_{1}\sin\alpha -x_{2}\cos\alpha=0\}$, where $\alpha$
represents the interior angle of $G$ and $0<\alpha\leq\pi$. The case $\alpha =\pi$ which corresponds to the dock problem was treated using different methods in \cite{Rahi}, see also \cite{Var1}.
\\
Let $v(x_1,x_2,t)$ denote the velocity potential function. We will
work under the assumption that the amplitude of the surface waves is
small with respect to the wave length. This will allow us to neglect
the nonlinear terms in $\eqref{dyncond}$. The assumption about small
amplitudes of the waves transforms the kinematic free surface
condition $\eqref{kincond}$ into $\eta_t-v_{x_2}=0$. These
considerations lead to the following linearized boundary value
problem
\begin{equation}
\label{linbvp}
\left\{\begin{array}{ccl} \Delta v(x_1,x_2,t) & = & 0\,\,\rm{for}\,\, (x_1,x_2)\in G\,\,\rm{and}\,t\geq 0\\
g\eta (x_1,t)+ v_t (x_1,0,t) & = & P(x_1,0,t)\\
\eta_t (x_1,t)-v_{x_2}(x_1,0,t) & = & 0,
\end{array}\right.
\end{equation}
subject to the initial conditions
\begin{equation}
\label{inc}
\left\{\begin{array}{ccl}\eta (x_1,0)  & = & \eta_0\\
v(x_1,x_2,0) & = & v_0\end{array}\right.
\end{equation}
with given $\eta_0$ and $v_{0}$ in appropriate Sobolev spaces.\\
In section $\ref{hypereveqconcl}$ we show existence and uniqueness for the problem $\eqref{linbvp}$ with initial conditions $\eqref{inc}$, namely we prove the following theorem
\begin{Thm}
\label{mainthm}
For any $T>0$ and for any $P(x_1,x_2,t)$ such that $P(x_1,0,t)\in C([0,T],L_{2}(\Gamma_{1}))$ and $P_{t}(x_1,0,t)\in L_{1}([0,T],L_{2}(\Gamma_{1}))$ there exist unique $v(x_1,x_2,t)\in C([0,T], \dot{H}_{1}(G))$ and $\eta(x_1,t)\in C([0,T],L_{2}(\Gamma_1))$ such that $v(x_1,0,t)\in C([0,T],H_{\frac{1}{2}}(\Gamma_1)), v_{t}(x_1,0,t)\in C([0,T],L_{2}(\Gamma_1)), \eta_t\in C([0,T],H_{-\frac{1}{2}}(\Gamma_1))$ which satisfy the boundary value problem

\begin{equation}
\label{finalbvp}
\left\{\begin{array}{ccr} \Delta v(x_1,x_2,t) & = & 0\,\,\,\, (x_1,x_2)\in G\,\,\rm{and}\,t\geq 0\\
g\eta (x_1,t)+ v_t (x_1,0,t) & = & P(x_1,0,t)\,\,\,\, (x_1,0)\in \Gamma_1\,\,\rm{and}\,t\geq 0\\
\eta_t (x_1,t)-v_{x_2}(x_1,0,t) & = & 0\,\,\,\, (x_1,0)\in \Gamma_1\,\,\rm{and}\,t\geq 0,
\end{array}\right.
\end{equation}
with the initial conditions
\begin{equation}
\left\{\begin{array}{ccl}\eta (x_1,0)  & = & \eta_0(x_1)\\
v(x_1,x_2,0) & = & v_0(x_1,x_2)\end{array}\right.
\end{equation}
where $\eta_{0}\in L_{2}(\Gamma_1)$ and $v_{0}\in \dot{H}_{1}(G)$.
\end{Thm}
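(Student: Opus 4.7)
The plan is to reduce the coupled system to a single abstract second-order evolution equation on $\Gamma_1$ driven by the Dirichlet-to-Neumann map $\Lambda$, and then to solve it by spectral calculus, using that $\Lambda$ is positive self-adjoint (a property constructed earlier in the paper through the auxiliary mixed Dirichlet/Neumann problem). First I would set $\phi(x_1,t) := v(x_1,0,t)$. Since $v(\cdot,\cdot,t)$ is harmonic in $G$ and satisfies the homogeneous Neumann condition on $\Gamma_2$ coming from the auxiliary problem, its full values in $G$ are recovered as the harmonic extension of $\phi(\cdot,t)$, and the trace identity $v_{x_2}(x_1,0,t) = \Lambda\phi(x_1,t)$ holds (with the sign dictated by the geometry, $G$ lying below $\Gamma_1$). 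Substituting into the third equation of $\eqref{finalbvp}$ yields $\eta_t = \Lambda\phi$; differentiating the second equation in $t$ and eliminating $\eta_t$ produces the abstract wave equation
\begin{equation}
\phi_{tt} + g\Lambda\phi \;=\; P_t(\cdot,0,t) \quad \text{on } \Gamma_1,\qquad \phi(0)=v_0|_{\Gamma_1},\ \phi_t(0)=P(\cdot,0,0)-g\eta_0.
\end{equation}
The required regularity of the data matches the hypotheses: $\phi(0)\in H_{1/2}(\Gamma_1)$ by the trace theorem for $\dot H_1(G)$, $\phi_t(0)\in L_2(\Gamma_1)$ because $\eta_0\in L_2(\Gamma_1)$ and $P(\cdot,0,0)\in L_2(\Gamma_1)$, and the forcing $P_t(\cdot,0,\cdot)\in L_1([0,T],L_2(\Gamma_1))$.

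Next I would invoke the spectral theorem for the positive self-adjoint operator $\Lambda$ to define $A := \sqrt{g\Lambda}$ and the strongly continuous families $\cos(tA)$ and $A^{-1}\sin(tA)$ on $L_2(\Gamma_1)$. Duhamel's formula then gives the unique mild solution
\begin{equation}
\phi(t) \;=\; \cos(tA)\,\phi(0) \;+\; A^{-1}\sin(tA)\,\phi_t(0) \;+\; \int_0^{t} A^{-1}\sin\!\bigl((t-s)A\bigr)\,P_t(\cdot,0,s)\,ds,
\end{equation}
and the usual energy identity $\tfrac{d}{dt}\bigl(\|\phi_t\|^2_{L_2}+\langle g\Lambda\phi,\phi\rangle\bigr) = 2\,\langle P_t,\phi_t\rangle$ combined with Gronwall yields $\phi\in C([0,T],H_{1/2}(\Gamma_1))$ and $\phi_t\in C([0,T],L_2(\Gamma_1))$. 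The full potential $v$ is reconstructed as the $\Lambda$-harmonic extension of $\phi$, producing $v\in C([0,T],\dot H_1(G))$; the free surface is recovered algebraically from $\eta = g^{-1}(P - v_t)$, giving $\eta\in C([0,T],L_2(\Gamma_1))$; and the identity $\eta_t = \Lambda\phi$ together with the boundedness $\Lambda : H_{1/2}(\Gamma_1)\to H_{-1/2}(\Gamma_1)$ delivers $\eta_t\in C([0,T],H_{-1/2}(\Gamma_1))$. Uniqueness follows by applying the same reduction to the difference of two solutions with vanishing data and forcing: the conserved energy forces $\phi\equiv 0$, hence $v\equiv 0$ and $\eta\equiv 0$.

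The main obstacle is not the abstract wave-equation step, which is standard once $\Lambda$ is in place, but ensuring that the reduction itself is legitimate on the corner domain $G$, i.e.\ that the harmonic extension solving the auxiliary mixed problem is bounded $H_{1/2}(\Gamma_1)\to \dot H_1(G)$ and that $\Lambda$ is genuinely a positive self-adjoint operator with the mapping property stated above, all uniformly down to the corner point where elliptic regularity is delicate. This is exactly what the earlier sections of the paper are designed to supply; given those results, the proof of Theorem \ref{mainthm} amounts to assembling the reduction, applying the functional calculus to $\Lambda$, and translating the resulting regularity of $\phi$ back into regularity of $(v,\eta)$ via the trace and extension maps.
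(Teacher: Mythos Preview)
Your proposal is correct and follows essentially the same route as the paper: reduce to the abstract wave equation $\phi_{tt}+g\Lambda\phi=P_t$ on $\Gamma_1$, solve it via the functional calculus for the positive self-adjoint $\Lambda$ using the cosine/sine families and Duhamel's formula, and then read off the regularity of $v$ and $\eta$ from the trace/extension maps and the algebraic relation $\eta=g^{-1}(P-v_t)$. The only cosmetic difference is that the paper obtains the $C([0,T],H_{1/2})$ and $C([0,T],L_2)$ bounds by applying $\underline{\Lambda}^{1/2}$ directly to the representation formula and using the uniform $L_2$-boundedness of $\cos(t\underline{\Lambda}^{1/2})$ and $\sin(t\underline{\Lambda}^{1/2})$, rather than the energy identity plus Gronwall you suggest.
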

In section $\ref{ellprob}$ we consider an auxiliary boundary value problem for which we show existence and uniqueness. This will ensure that the Dirichlet to Neumann operator is well defined. Another fact to be established is the selfadjointness and the positivity of the Dirichlet to Neumann operator which is done in section $\ref{dnselfadj}$.

\section{The elliptic problem in a corner}
\label{ellprob}
Consider the following auxiliary boundary value problem:
\begin{equation}
\label{auxbvp}
\left\{\begin{array}{ccl} \Delta v(x_1,x_2,t)& = &
\frac{\partial ^{2}v}{\partial x_1^{2}}(x_1,x_2,t)+ \frac{\partial
^{2}v}{\partial
x_2^{2}}(x_1,x_2,t) =  0\,\,\rm{for}\,\,(x_1,x_2)\in G, t\geq 0,\\
v|_{\Gamma_1}& = & f\\
\frac{\partial v}{\partial\nu}|_{\Gamma_2} & = & 0.
\end{array}\right.
\end{equation}
\\
\\
Theorem $\ref{eumixedbvp}$ will show that for $f$ in a suitable
Sobolev space we obtain a unique solution $v$ for the boundary value
problem $\eqref{auxbvp}$. This allows us to define by $$\Lambda
f=\frac{\partial v}{\partial x_2}\big|_{\Gamma_1}$$ the so called
Dirichlet to Neumann operator. Using $\Lambda$ and that
$f=v(x_1,0,t)$ we see that the boundary conditions in
$\eqref{linbvp}$ are equivalent to the single equation
$$v_{tt}+g\Lambda v=P,$$
for which existence and uniqueness will be proved.\\
The above boundary value problem $\eqref{auxbvp}$ will
be studied in classes of Sobolev spaces, cf. for example
\cite{Esk1}.We briefly recall those definitions. As usual
$H_s(\R^{2})$ denotes the Sobolev space with the norm
$$\vert\vert
u\vert\vert_{s}^{2}=\int_{\R^{2}}(1+\vert\xi\vert^{2})^{s}\vert\tilde{u}(\xi)\vert^{2}d\xi,$$
where $\tilde{u}$ is the Fourier transform of $u$. By
$\mathring{H}_s(G)$ we denote the subspace of $H_{s}(\R^{2})$
consisting of functions with support in $\overline{G}$. $H_{s}(G)$
is defined to be the space of all restrictions of functions in
$H_s(\R^{2})$ to the domain $G$ with the norm
\begin{equation}
\vert\vert f\vert\vert_{s}^{+ }=\inf _{l}\vert\vert
lf\vert\vert_{s},
\end{equation}
 where $f$ is a distribution in $G$, $lf$ is an
arbitrary extension of $f$ to $\R^{2}$ belonging to $H_{s}(\R^{2})$,
and the infimum is taken over all extensions of $f$.\\
On $\Gamma_k$, $k=1,2$, we define $H_s(\Gamma_k)$ to be the space of
all restrictions of distributions in $H_s(\R^1)$ to $\Gamma_k$ with
the norm
\begin{equation}
\label{gammanorm} [h]_s^{+}=\inf_{l}[lh]_s,
\end{equation}
where $lh$ is an arbitrary extension of $h$ to $\R^1$ and $[lh]_s$
is the norm in $H_{s}(\R^{1})$.\\
The Sobolev space $\mathring{H}_s(\Gamma_k)$ is defined as the
completion of $C_0^{\infty}(\Gamma_k)$ with respect to the norm
$\eqref{gammanorm}$.\\
We shall also need the following modifications of the Sobolev
spaces. Let us denote with $\dot{H}_s(\R^2)$ the closure of
$C_0^{\infty}(\R^{2})$ with respect to the norm
\begin{equation}
\label{dotsobspaces}
\int_{\R^2}\vert\xi\vert^{2s}\vert\tilde{u}(\xi)\vert^2
d\xi,\,\,\,\,\,\vert\xi\vert=\sqrt{\xi_1^2+\xi_2^2}
\end{equation}
Then, for the domain $G$ we define $\dot{H}_s(G)$ to be the space of
restrictions of distributions from $\dot{H}_s(\R^{2})$.
We will prove the following
\begin{Thm}
\label{eumixedbvp} For any
$f\in\mathring{H}_{\frac{1}{2}}(\Gamma_1)$  there exists a unique
$v(x_1,x_2)\in \dot{H}_1(G)$ such that
\begin{equation}
\label{firstpart} \left\{\begin{array}{ccl}
  \Delta v(x_1,x_2,t)& = & 0\,\,\rm{for}\,\,(x_1,x_2)\in G,t\geq 0,\\
v|_{\Gamma_1} & = & f,\\ \frac{\partial v}{\partial\nu}|_{\Gamma_2}
& = & 0.
\end{array}\right.
\end{equation}
\end{Thm}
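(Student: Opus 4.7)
The time variable plays no role, since \eqref{firstpart} is a stationary boundary value problem parametrised by $t$; I therefore treat $f$ and $v$ as functions of $(x_1,x_2)$ alone. My plan is to solve \eqref{firstpart} variationally in the homogeneous space $\dot{H}_1(G)$. The point of working in $\dot{H}_1(G)$ is that its natural norm $\|u\|_{\dot{H}_1}=\bigl(\int_G|\nabla u|^2\,dx\bigr)^{1/2}$ coincides (up to Parseval) with the Dirichlet form
\begin{equation*}
a(u,\varphi)=\int_G\nabla u\cdot\nabla\varphi\,dx,
\end{equation*}
so continuity and coercivity of $a$ on $\dot{H}_1(G)$ come for free.

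First I would appeal to the trace and lifting theory for the corner domain $G$. The assumption $f\in\mathring{H}_{1/2}(\Gamma_1)$ is precisely what is needed so that extension of $f$ by zero from $\Gamma_1$ to the whole axis lies in $H_{1/2}(\mathbb{R})$, and this extension can in turn be lifted to an element $F\in\dot{H}_1(G)$ with $F|_{\Gamma_1}=f$ (for instance by harmonic extension to the upper half-plane followed by restriction to $G$). The standard trace/extension results from \cite{Esk1}, transcribed to the homogeneous setting via the Fourier transform, supply the continuity of this lifting. Once $F$ is fixed, set
\begin{equation*}
V=\{u\in\dot{H}_1(G)\ :\ u|_{\Gamma_1}=0\},
\end{equation*}
a closed subspace of $\dot{H}_1(G)$, and seek $v=F+w$ with $w\in V$ satisfying
\begin{equation*}
a(w,\varphi)=-a(F,\varphi)\qquad\text{for every }\varphi\in V.
\end{equation*}
Continuity of both sides on $V$ is immediate, coercivity is the identity $a(w,w)=\|w\|_{\dot{H}_1}^{2}$, and Lax--Milgram produces a unique $w$. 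Uniqueness of $v$ (independence of the choice of lift $F$) follows from the same coercivity applied to a difference of two candidate solutions.

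It remains to verify that this weak solution realises \eqref{firstpart} in the stated sense. The condition $v|_{\Gamma_1}=f$ holds by construction. The harmonic equation $\Delta v=0$ follows from $a(v,\varphi)=0$ for all $\varphi\in C_0^\infty(G)\subset V$. The Neumann condition $\partial_\nu v|_{\Gamma_2}=0$ is the natural boundary condition of the variational problem: testing against $\varphi\in C_0^\infty(\overline{G})$ that vanishes near $\Gamma_1$ (but is free on $\Gamma_2$), Green's formula together with $\Delta v=0$ forces the boundary integral over $\Gamma_2$ to vanish, which is the weak form of $\partial_\nu v=0$ on $\Gamma_2$.

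The main obstacle is the trace/lifting step: one has to prove that the map $u\mapsto u|_{\Gamma_1}$ from $\dot{H}_1(G)$ onto $\mathring{H}_{1/2}(\Gamma_1)$ is continuous and surjective, and that its kernel is exactly $V$. The subtlety comes entirely from the corner at the origin, because the homogeneous norm gives no weight to low frequencies and so the endpoint space on $\Gamma_1$ is $\mathring{H}_{1/2}$ rather than $H_{1/2}$. An alternative, if one wishes to avoid abstract trace theory, is to separate variables in polar coordinates $(r,\theta)$ with $\theta\in[0,\alpha]$ and apply the Mellin transform in $r$: the ansatz $\tilde{v}(\lambda,\theta)=A(\lambda)\cos(\lambda\theta)+B(\lambda)\sin(\lambda\theta)$ together with $\tilde{v}(\lambda,0)=\tilde{f}(\lambda)$ and $\partial_\theta\tilde{v}(\lambda,\alpha)=0$ determines $A,B$ explicitly, and inversion together with Parseval on the Mellin transform yields existence, uniqueness and the continuity estimate in $\dot{H}_1(G)$ in one stroke; here the constraint $0<\alpha\le\pi$ ensures that the resulting angular symbol behaves as expected along the Mellin contour and the condition $f\in\mathring{H}_{1/2}$ (rather than merely $H_{1/2}$) is what controls the behaviour near $r=0$.
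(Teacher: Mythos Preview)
Your argument is correct in outline but follows a genuinely different route from the paper's. The paper does \emph{not} proceed variationally: it instead reduces \eqref{firstpart} to an auxiliary boundary value problem of the form \eqref{partplusminbvp} by composing with the pseudodifferential ``minus operator'' $\Lambda_{-}^{-1/2}$, verifies that the choice $s=1$ satisfies the \emph{corner condition} of \cite{Esk3}, and then invokes Theorem~2.1 of \cite{Esk3} to obtain existence and uniqueness for the auxiliary problem; the translation back to $v$ is effected by $v=p_G\Lambda_{-}^{-1/2}lu$, with the ``minus'' property ensuring the boundary conditions transfer correctly. Uniqueness is obtained by running the same transformation on a solution of the homogeneous problem.

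Your Lax--Milgram approach is more elementary and makes the Neumann condition on $\Gamma_2$ appear transparently as the natural boundary condition, whereas the paper's method, though heavier, plugs directly into an existing index-theoretic framework that also yields finer information (e.g.\ the precise role of the opening angle through the corner condition). The honest caveat you raise---surjectivity and continuity of the trace $\dot{H}_1(G)\to\mathring{H}_{1/2}(\Gamma_1)$ in the \emph{homogeneous} setting on a sector---is exactly the point that would need to be written out to make your proof complete; the paper sidesteps this entirely by working with the $\Lambda_{-}^{\pm 1/2}$ calculus, for which the relevant mapping properties are supplied by \cite{Esk1,Esk2}. Your Mellin-transform alternative is in fact closer in spirit to what underlies \cite{Esk3}, and carrying it out would recover essentially the same analysis of the angular symbol that the paper encodes in the corner condition.
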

This Theorem will be proved by showing that it is equivalent with
another boundary value problem whose existence and uniqueness are
proved in the paper \cite{Esk3}. We need first to fix some
notations.\\
{\bf Notation}
For $t\in\R$ we denote $$(t-i0)^{\frac{1}{2}-s}=\lim_{\varepsilon\to
0}e^{(\frac{1}{2}-s)\ln(t-i\varepsilon)},\,\,\varepsilon >0,$$ where
we take the branch of $\ln(t-i\varepsilon)$ that is real for $t>0$
and $\varepsilon =0$. Now, let
$$\Lambda_{-}^{\frac{1}{2}-s}=\left(i\frac{\partial}{\partial_{x_1}}\cos\frac{\alpha}{2}+i\frac{\partial}{\partial
x_2}\sin\frac{\alpha}{2}-i0\right)^{\frac{1}{2}-s}$$ be a
pseudodifferential operator in $\R^{2}$ with symbol
$$\Lambda_{-}^{\frac{1}{2}-s}(\xi_1,\xi_2)=\left(\xi_1\cos\frac{\alpha}{2}+\xi_2\sin\frac{\alpha}{2}-i0\right)^{\frac{1}{2}-s}$$

\begin{Rem}
The operator $\Lambda_{-}^{\frac{1}{2}-s}$ has the property that if
$u_{-}$ is a distribution with support in $CG$ then the support of
$\Lambda_{-}^{\frac{1}{2}-s}u_{-}$ is also in $CG$. An operator with
such a property is called a ``minus'' operator with respect to the
domain $G$. For proofs and details concerning ``minus'' operators
see Lemma $20.2$ in $\cite{Esk1}$ and Lemma 2.2 in \cite{Esk2}.
\end{Rem}
\begin{Rem}
\label{imppropmin} If $A_{-}$ is a ``minus'' operator and $u$ is a
distribution in $G$ we have that $p_{G}A_{-}lu$ is independent of
the choice of the extension $lu$ of $u$ to $\R^{2}$ where $p_{G}$ is
the restriction operator to $G$.
\end{Rem}
Remark $\ref{imppropmin}$ allows us to consider the following
boundary value problem:
\begin{equation}
\label{plusminbvp}
\left\{\begin{array}{lll} \Delta u & = & 0,\,\,(x_1,x_2)\in G\\
p_{1}^{+}\left(i\frac{\partial}{\partial
x_1}-i0\right)^{s-m_1-\frac{1}{2}}B_1\left(i\frac{\partial}{\partial_{x_1}},i\frac{\partial}{\partial_{x_2}}\right)\Lambda_{-}^{\frac{1}{2}-s}lu
& = &h_{1}(x_1),\,x_1>0\\

p_{2}^{-}\left(-i\frac{\partial}{\partial_{x_1}}\cos\alpha
-i\frac{\partial}{\partial_{x_2}}\sin\alpha+i0\right)^{s-m_2-\frac{1}{2}}B_2\left(i\frac{\partial}{\partial_{x_1}},i\frac{\partial}{\partial_{x_2}}\right)\Lambda_{-}^{\frac{1}{2}-s}lu
& = & h_2(y_1),\,y_1<0
\end{array}\right.
\end{equation}
where $p_{1}^{+}, p_{2}^{-}$ are restrictions operators to
$\Gamma_1,\Gamma_2$, respectively, $B_1(\xi_1,\xi_2),
B_2(\xi_1,\xi_2)$ are homogeneous polynomials of degrees $m_1, m_2$
respectively, and the coordinates $(y_1,y_2)$ are related to
$(x_1,x_2)$  through the equations

\begin{equation}
\label{coordch}
y_1=-x_1\cos\alpha-x_2\sin\alpha,\,\,\,y_2=x_1\sin\alpha-x_2\cos\alpha.
\end{equation}
\begin{Rem}
Theorem $2.1$ from $\cite{Esk3}$ asserts that for any $(h_1,h_2)\in
L^{2}(\Gamma_1)\times L^{2}(\Gamma_2)$ the boundary value problem
$\eqref{plusminbvp}$ has a unique solution
$u\in\dot{H}_{\frac{1}{2}}(G)$ provided $s$ satisfies the so-called
``corner condition'' $(2.73)$ from $\cite{Esk3}$. If $B_1$ is now
the identity operator and
$B_2=\frac{\partial}{\partial\nu}=\nu_1\frac{\partial}{\partial
x_1}+\nu_2\frac{\partial}{\partial x_2}$, a calculation shows that
$s=1$ verifies the ``corner condition'' $(2.73)$ from $\cite{Esk3}$.
We only state what this conditions means in our case and show that
$s=1$ verifies it. For details we ask the
reader to consult the proof in $\cite{Esk3}$.\\
\\

Let us first establish some notations. We set first
$\lambda_1=-i,\lambda_2=i$. Denote by
$$\mu_j=\frac{\sin\alpha-\lambda_j\cos\alpha}{-\cos\alpha-\lambda_j\sin\alpha},\,\,j=1,2,$$
from which follows that $\mu_1=-i$ and $\mu_2=i$. We will also need
the numbers $\beta_1,\beta_2$ given by
\begin{equation}
\label{betas} i\beta_k=\ln
(\cos\alpha+\lambda_k\sin\alpha)=\ln\vert\cos\alpha+\lambda_k\sin\alpha\vert+i\rm{arg}(\cos\alpha+\lambda_k\sin\alpha),\,k=1,2.
\end{equation}
from which we deduce that $\beta_1=2\pi-\alpha$ and
$\beta_2=\alpha$.\\
Denote also by $B_{2}^{(1)}(\eta_1,\eta_2)$ the symbol of $B_2$ in
the $(y_1,y_2)$ coordinates. It follows from $\eqref{coordch}$ that
it has the form
$$B_2^{(1)}(\eta_1,\eta_2)=B_2(-\eta_1\cos\alpha+\eta_2\sin\alpha,-\eta_1\sin\alpha-\eta_2\cos\alpha)=
i\nu_1(-\eta_1\cos\alpha+\eta_2\sin\alpha)+i\nu_2(-\eta_1\sin\alpha-\eta_2\cos\alpha).$$
We can now formulate the corner condition, which is the following:

\begin{equation}
\label{cornercondition} M_{0}\left(z-s+\frac{1}{2}\right)\neq
0,\,\,\,\,\textup{for any}\,\,\,z=\frac{1}{2}+i\tau,\,\,\tau\in\R
\end{equation}
where $$M_{0}(z)=-b_{2}^{(0)}+e^{2\pi iz}e^{-i\beta_1 z}e^{i\beta_2
z},$$  $$b_2^{(0)}=(B_2^{+})^{-1}e^{i\pi}B_2^{-},$$
$$B_2^{+}=B_2^{(1)}(1,\mu_1)=B_2^{(1)}(1,-i),$$
$$B_2^{-}=B_2^{(1)}(-1,-\mu_2)=B_2^{(1)}(-1,-i).$$
We will show that $s=1$ verifies the condition
$\eqref{cornercondition}$, i.e., we will show that $M_{0}(i\tau)\neq
0$ for all $\tau\in\R$. We have the following
$$B_2^{+}=\nu_1\sin\alpha-\nu_2\cos\alpha-i(\nu_2\sin\alpha+\nu_1\cos\alpha),$$
$$B_2^{-}=\nu_1\sin\alpha-\nu_2\cos\alpha+i(\nu_2\sin\alpha+\nu_1\cos\alpha),$$
equalities which show that $\vert b_2^{(0)}\vert =1$. Since
$\beta_1=2\pi -\alpha$ and $\beta_2=\alpha$ we obtain that
\begin{equation}
\label{simpcornercond}
M_{0}(i\tau)=-b_2^{(0)}+e^{-2\alpha\tau}.
\end{equation}
Since $\alpha >0$, we see from equation $\eqref{simpcornercond}$
that $M_{0}(i\tau)\neq 0$ for all $\tau\neq 0$. Therefore it only
remains to show that $b_{2}^{(0)}\neq 1$.\\
Now
\begin{equation}
\begin{split}
b_{2}^{(0)}&=-\frac{B_2{-}}{B_{2}^{+}}=-\frac{\nu_1\sin\alpha-\nu_2\cos\alpha+i(\nu_2\sin\alpha+\nu_1\cos\alpha)}
{\nu_1\sin\alpha-\nu_2\cos\alpha-i(\nu_2\sin\alpha+\nu_1\cos\alpha)}\\
&
=\frac{(\nu_1^{2}-\nu_2^{2})\cos(2\alpha)+2\nu_1\nu_2\sin(2\alpha)+\left(2\nu_1\nu_2\cos(2\alpha)-(\nu_1^{2}-\nu_2^{2})\sin(2\alpha)\right)i}{\nu_1^{2}+\nu_2^{2}}\\
& =-\cos^{2}(2\alpha)-\sin^{2}(2\alpha)=-1,
\end{split}
\end{equation}
since $(\nu_1,\nu_2)=(-\sin\alpha,\cos\alpha)$. Thus $s=1$ verifies
the corner condition $\eqref{cornercondition}$.
\end{Rem}
Therefore the following Theorem is true and allows us to prove Theorem $\ref{eumixedbvp}$.
\begin{Thm}
\label{thmpartplusminbvp} The boundary value problem

\begin{equation}
\label{partplusminbvp}
\left\{\begin{array}{lll} \Delta u & = & 0,\,\,(x_1,x_2)\in G\\
p_{1}^{+}\left(i\frac{\partial}{\partial
x_1}-i0\right)^{\frac{1}{2}}\Lambda_{-}^{-\frac{1}{2}}lu
& = &h(x_1),\,x_1>0\\

p_{2}^{-}\left(-i\frac{\partial}{\partial_{x_1}}\cos\alpha
-i\frac{\partial}{\partial_{x_2}}\sin\alpha+i0\right)^{-\frac{1}{2}}\frac{\partial}{\partial
\nu}\Lambda_{-}^{-\frac{1}{2}}lu & = & 0,\,y_1<0
\end{array}\right.
\end{equation}
has a unique solution $u\in\dot{H}_{\frac{1}{2}}(G)$ for any $h\in
L_2(\Gamma_1)$.
\end{Thm}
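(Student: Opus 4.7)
The plan is to recognize Theorem \ref{thmpartplusminbvp} as a special case of Theorem 2.1 of \cite{Esk3} applied to the family \eqref{plusminbvp}. The strategy breaks into three short steps: (i) pick the correct parameters so that \eqref{plusminbvp} collapses to \eqref{partplusminbvp}; (ii) invoke the verification of the corner condition carried out in the Remark just above the statement; and (iii) quote the existence and uniqueness result from \cite{Esk3}.

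First I would specialize \eqref{plusminbvp} by setting $s=1$, $B_1$ equal to the identity (so $m_1=0$) and $B_2=\partial/\partial\nu$ (so $m_2=1$). A direct substitution into the exponents $s-m_1-\tfrac{1}{2}$ and $s-m_2-\tfrac{1}{2}$ yields $\tfrac{1}{2}$ and $-\tfrac{1}{2}$ respectively, while $\tfrac{1}{2}-s=-\tfrac{1}{2}$ produces the factor $\Lambda_{-}^{-1/2}$. With the data $h_1=h\in L_2(\Gamma_1)$ and $h_2=0\in L_2(\Gamma_2)$ the system \eqref{plusminbvp} then agrees termwise with \eqref{partplusminbvp}. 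In particular $(h_1,h_2)\in L_2(\Gamma_1)\times L_2(\Gamma_2)$, which is the hypothesis class of Theorem~2.1 of \cite{Esk3}.

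Second, the corner condition \eqref{cornercondition} must hold at $s=1$. This is exactly what was checked in the preceding Remark: the computation there gives $b_2^{(0)}=-1$, and since $\beta_1=2\pi-\alpha$, $\beta_2=\alpha$, formula \eqref{simpcornercond} yields
\begin{equation*}
M_0(i\tau)=-b_2^{(0)}+e^{-2\alpha\tau}=1+e^{-2\alpha\tau}>0
\end{equation*}
for every $\tau\in\R$ and every $\alpha\in(0,\pi]$. Hence the corner condition is satisfied, and the hypotheses of Theorem~2.1 of \cite{Esk3} are in place.

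Finally I would invoke \cite{Esk3}: the cited theorem yields a unique $u\in\dot{H}_{1/2}(G)$ solving \eqref{plusminbvp} with the specialization above, and this $u$ is precisely the unique $\dot{H}_{1/2}(G)$ solution of \eqref{partplusminbvp}. The main potential obstacle is purely notational, namely making sure that the parameter choice $(s,B_1,B_2,m_1,m_2)=(1,I,\partial/\partial\nu,0,1)$ produces exactly the operators written in \eqref{partplusminbvp}, and that the corner-condition computation in the Remark was indeed performed for this same choice; once that bookkeeping is confirmed, no further analysis is required beyond what is already in \cite{Esk3}.
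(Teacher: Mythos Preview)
Your proposal is correct and is exactly the paper's own argument: the theorem is not proved independently but is stated as the specialization of Theorem~2.1 of \cite{Esk3} to \eqref{plusminbvp} with $s=1$, $B_1=I$, $B_2=\partial/\partial\nu$, after the preceding Remark verifies the corner condition \eqref{cornercondition} for this choice. Your bookkeeping of the exponents and the data $(h_1,h_2)=(h,0)$ is accurate, and your rewriting $M_0(i\tau)=1+e^{-2\alpha\tau}>0$ is just the paper's computation $b_2^{(0)}=-1$ made explicit.
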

{\bf Proof of Theorem $\ref{eumixedbvp}$:}\\
We start with $f\in\mathring{H}_{\frac{1}{2}}(\Gamma_1)$. Put now
$$h=p_1^{+}\left(i\frac{\partial}{\partial x_1}-i0\right)^{\frac{1}{2}}f.$$ This implies that
$h\in L_2(\Gamma_1)$. Now we can apply Theorem
$\ref{thmpartplusminbvp}$ Let $u$ be the unique solution of boundary
value problem $\eqref{partplusminbvp}$.
We then set
\begin{equation}
\label{vasfcnofu}
v=p_{G}\Lambda_{-}^{-\frac{1}{2}}lu,
\end{equation}
where $p_G$ and $l$ are as before. Using Lemma $2.2$ from
\cite{Esk2} we obtain that $v\in\dot{H}_{1}(G)$. Due to the fact
that ``minus operators'' commute with the differential operators we
have that
\begin{equation}
\label{one} \Delta v=0,\,\, \rm{for}\,\,(x_1,x_2)\in G
\end{equation}
From $\eqref{vasfcnofu}$ and the second equation of
$\eqref{partplusminbvp}$ we see that
$$p_1^{+}\left(i\frac{\partial}{\partial
x_1}-i0\right)^{\frac{1}{2}}v=h=
p_1^{+}\left(i\frac{\partial}{\partial
x_1}-i0\right)^{\frac{1}{2}}f.$$ It then follows that
$$p_1^{+}\left(i\frac{\partial}{\partial
x_1}-i0\right)^{\frac{1}{2}}(v-f)=0$$ which means that
$$\left(i\frac{\partial}{\partial
x_1}-i0\right)^{\frac{1}{2}}(v-f)=v_{-},$$ where $v_{-}$ has its
support in $\R^{1}\setminus\Gamma_1$. But then
$$v-f=\left(i\frac{\partial}{\partial
x_1}-i0\right)^{-\frac{1}{2}}(v_{-})$$ and since
$\left(i\frac{\partial}{\partial x_1}-i0\right)^{-\frac{1}{2}}$ is a
``minus operator'' we obtain that the support of $v-f$ is contained
in $\R^{1}\setminus\Gamma_1$. This just means that
\begin{equation}
\label{two} v|_{\Gamma_1}=f.
\end{equation}
Using the second boundary condition in $\eqref{partplusminbvp}$ and
the fact that
$$p_{2}^{-}\left(-i\frac{\partial}{\partial_{x_1}}\cos\alpha
-i\frac{\partial}{\partial_{x_2}}\sin\alpha+i0\right)^{\frac{1}{2}}$$
is a ``minus operator'', we obtain that
\begin{equation}
\label{three}
 \frac{\partial v}{\partial\nu}|_{\Gamma_2}=0.
\end{equation}
The relations $\eqref{one},\eqref{two}$, and $\eqref{three}$ prove
the existence of a solution to the boundary value problem
$\eqref{firstpart}$. In order to prove the uniqueness we will show
that the boundary value problem
\begin{equation}
\label{homogbvp} \left\{\begin{array}{ccl}
  \Delta v(x_1,x_2,t)& = & 0\,\,\rm{for}\,\,(x_1,x_2)\in G,t\geq 0,\\
v|_{\Gamma_1} & = & 0,\\ \frac{\partial v}{\partial\nu}|_{\Gamma_2}
& = & 0.
\end{array}\right.
\end{equation}
has only the trivial solution in $\dot{H}_{1}(G)$. Let $v \in \dot{H}_{1}(G)$ be a
solution of the boundary value problem $\eqref{homogbvp}$. Denote by
$lv$ the extension by zero of $v$ to $\R^{2}$ and put
$u=p_{G}\Lambda_{-}^{\frac{1}{2}}lv$. Then
$u\in\mathring{H}_{\frac{1}{2}}(G)$ and $u$ satisfies the following
boundary value problem
\begin{equation}
\left\{\begin{array}{lll} \Delta u & = & 0,\,\,(x_1,x_2)\in G\\
p_{1}^{+}\left(i\frac{\partial}{\partial
x_1}-i0\right)^{\frac{1}{2}}\Lambda_{-}^{-\frac{1}{2}}lu
& = &0,\,x_1>0\\
p_{2}^{-}\left(-i\frac{\partial}{\partial_{x_1}}\cos\alpha
-i\frac{\partial}{\partial_{x_2}}\sin\alpha+i0\right)^{-\frac{1}{2}}\frac{\partial}{\partial
\nu}\Lambda_{-}^{-\frac{1}{2}}lu & = & 0,\,y_1<0
\end{array}\right.
\end{equation}
Since by Theorem $\ref{thmpartplusminbvp}$ the solution to the above
boundary value problem is unique, it follows that $u=0$. Using that
$u=p_{G}\Lambda_{-}^{\frac{1}{2}}lv$ and that
$\Lambda_{-}^{\frac{1}{2}}$ is a ``minus operator'' it follows that
$v=0$.

\begin{Rem}
\label{defDN} Theorem $\ref{eumixedbvp}$ gives rise to an operator
$$\Lambda :\mathring{H}_{\frac{1}{2}}(\Gamma_1)\rightarrow
H_{-\frac{1}{2}}(\Gamma_1)$$ defined by
\begin{equation}
\label{formulaDN} \Lambda f:=\frac{\partial
v}{\partial\nu}\big|_{\Gamma_1},
\end{equation}
where $v\in\dot{H}_1(G)$ is the
unique solution to the boundary value problem $\eqref{firstpart}$.
$\Lambda$ is called the Dirichlet to Neumann operator.
\end{Rem}

\section{Selfadjointness of the Dirichlet to Neumann operator}
\label{dnselfadj}
We now return to the system
\begin{equation}
\label{etasys}
\left\{\begin{array}{ccl}
g\eta (x_1,t)+ v_t (x_1,0,t) & = & P(x_1,0,t)\\
\eta_t (x_1,t)-v_{x_2}(x_1,0,t) & = & 0
\end{array}\right.
\end{equation} at $x_2=0$, with initial conditions
\begin{equation}
\left\{\begin{array}{ccl}\eta(x_1,0)& = &\eta_0(x_1)\\
 v(x_1,0,0) &  = & v_0(x_1,0)
\end{array}\right.
\end{equation}
By elimination of $\eta$ between the above relations and using
$\eqref{formulaDN}$ the single equation in $v$ is obtained:
\begin{equation}
\label{etaelim}
 v_{tt}(x_1,0,t)+g\Lambda v(x_1,0,t)=P_{t}(x_1,0,t),
\end{equation}
with initial conditions
\begin{equation}v(x_1,0,0)=v_{0}(x_1,0),\,\,\,\,v_{t}(x_1,0,0)=P(x_1,0,0)-g\eta_{0}(x_1)\end{equation}
By denoting $\eta =\frac{P}{g}-\frac{v_t}{g}$ we see that
$\eta_{t}=\frac{P_t}{g}-\frac{v_{tt}}{g}$ and from $\eqref{etaelim}$
we have that $\eta_t-v_{x_2}=0$, and this shows that
$\eqref{etasys}$ and $\eqref{etaelim}$ are equivalent. Therefore we
will show existence and uniqueness for $\eqref{etaelim}$. In order
to do this we will show that the operator $\Lambda$ is a positive
and self-adjoint operator. We consider first $\tilde{\Lambda}$ to be
the unbounded operator with domain
$$\rm{dom}(\tilde{\Lambda})=\{f\in C_{0}^{\infty}(\Gamma_1)\,\rm{such\,
that}\,\tilde{\Lambda} f\in L_{2}(\Gamma_1)\},$$ and defined also by
$\eqref{formulaDN}$.

\begin{Thm}
We have that $(\tilde{\Lambda} f,g)=(f,\tilde{\Lambda} g)$ for every
$f,g\in \rm{dom}(\tilde{\Lambda})$, i.e., $\tilde{\Lambda}$ is a
symmetric operator.
\end{Thm}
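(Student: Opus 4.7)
The natural approach is Green's first identity. Let $f,g\in\rm{dom}(\tilde\Lambda)\subset C_0^\infty(\Gamma_1)$, and let $v_f,v_g\in\dot H_1(G)$ be the unique harmonic extensions provided by Theorem \ref{eumixedbvp}, so that $v_f|_{\Gamma_1}=f$, $v_g|_{\Gamma_1}=g$, and $\partial_\nu v_f|_{\Gamma_2}=\partial_\nu v_g|_{\Gamma_2}=0$. By the definition of $\tilde\Lambda$ and the assumption $\tilde\Lambda f,\tilde\Lambda g\in L_2(\Gamma_1)$, all the boundary traces we shall need are well defined.

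Formally, Green's first identity on $G$ reads
\begin{equation*}
\int_G \nabla v_f\cdot\nabla v_g\,dx \;=\; \int_{\partial G} g_{*} \,\partial_\nu v_f\,ds \;-\; \int_G v_g\,\Delta v_f\,dx,
\end{equation*}
where $g_*$ stands for the trace of $v_g$ on $\partial G$. The bulk term vanishes since $\Delta v_f=0$; the part of the boundary integral over $\Gamma_2$ vanishes because $\partial_\nu v_f|_{\Gamma_2}=0$; and the part over $\Gamma_1$ equals $(\tilde\Lambda f,g)_{L_2(\Gamma_1)}$ by the very definition \eqref{formulaDN}. Hence
\begin{equation*}
(\tilde\Lambda f,g)_{L_2(\Gamma_1)} \;=\; \int_G \nabla v_f\cdot\nabla v_g\,dx,
\end{equation*}
and interchanging the roles of $f$ and $g$ gives $(f,\tilde\Lambda g)_{L_2(\Gamma_1)}=\int_G \nabla v_g\cdot\nabla v_f\,dx$. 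The right-hand sides agree, which yields the asserted symmetry.

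The step that requires genuine work is justifying Green's identity on the unbounded domain $G$, which has a corner at the origin and two semi-infinite edges. The plan is to exhaust $G$ by a family of Lipschitz subdomains
\begin{equation*}
G_{R,\varepsilon} \;=\; \{\,x\in G : \varepsilon<|x|<R\,\},
\end{equation*}
apply the classical Green identity on each $G_{R,\varepsilon}$ (whose boundary consists of pieces of $\Gamma_1,\Gamma_2$ and two circular arcs $\gamma_R,\gamma_\varepsilon$), and then pass to the limit $R\to\infty$, $\varepsilon\to 0$. On the arc $\gamma_R$ at infinity one uses that $f,g$ have compact support: the solutions $v_f,v_g$ admit a far-field expansion with $v=O(|x|^{-1})$ and $\nabla v=O(|x|^{-2})$, so the integrand on $\gamma_R$ is $O(R^{-3})$ against an arc-measure of order $R$. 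On the arc $\gamma_\varepsilon$ around the corner, one uses the $\dot H_1(G)$-regularity of $v_f,v_g$ together with the assumption $\tilde\Lambda f,\tilde\Lambda g\in L_2(\Gamma_1)$ and the zero Neumann data on $\Gamma_2$, which together forbid any singular behaviour at the vertex strong enough to produce a non-vanishing limit; a standard cutoff/density argument (writing the integrand via $\nabla v_f\cdot\nabla v_g$ and using dominated convergence in $G_{R,\varepsilon}\uparrow G$) then transfers the identity to the full domain.

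The main obstacle is exactly the vertex contribution: unlike the arcs at infinity, where compact support of $f,g$ settles the matter immediately, at the corner one must rule out any anomalous boundary contribution produced by the singularity of harmonic functions in a wedge. This is controlled precisely by the hypothesis $\tilde\Lambda f\in L_2(\Gamma_1)$ in the definition of $\rm{dom}(\tilde\Lambda)$, which selects among all $\dot H_1(G)$-solutions those with enough boundary regularity for the corner arc integral to vanish in the limit, allowing the Green identity to be applied as above.
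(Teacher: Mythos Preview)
Your overall strategy---Green's identity on the annular sectors $G_{R,\varepsilon}$ and passing to the limit---is exactly the route the paper takes. Two points deserve correction, one minor and one a genuine gap.

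\textbf{Far-field decay.} Your claimed rates $v=O(|x|^{-1})$, $\nabla v=O(|x|^{-2})$ are not correct for general $\alpha\in(0,\pi]$. The paper obtains the precise behaviour by separation of variables in the wedge: for $r$ large (where $f$ vanishes) one has
\[
v(r,\theta)=\sum_{n\le -1}A_n\, r^{(n+\frac12)\pi/\alpha}\sin\!\Big(\big(n+\tfrac12\big)\tfrac{\pi}{\alpha}\theta\Big),
\]
the cutoff $n\le -1$ being forced by $v\in\dot H_1(G)$. Hence $v=O(r^{-\pi/(2\alpha)})$, which for $\alpha=\pi$ is only $O(r^{-1/2})$. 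Fortunately this slower decay still makes the arc integral $O(R\cdot R^{-\pi/(2\alpha)}\cdot R^{-\pi/(2\alpha)-1})=O(R^{-\pi/\alpha})\to 0$, so your conclusion at infinity survives; only the stated exponent needs fixing.

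\textbf{Corner contribution.} Here your argument is incomplete. You invoke ``the assumption $\tilde\Lambda f\in L_2(\Gamma_1)$'' and a ``standard cutoff/density argument'' without carrying either out, and you identify this as the main obstacle but do not resolve it. The paper does \emph{not} use the $L_2$ hypothesis on $\tilde\Lambda f$ at this step; it uses only $v\in\dot H_1(G)$. Concretely, near the vertex (where again $f$ vanishes) separation of variables gives
\[
v(r,\theta)=\sum_{n\ge n^0}A_n\, r^{(n+\frac12)\pi/\alpha}\sin\!\Big(\big(n+\tfrac12\big)\tfrac{\pi}{\alpha}\theta\Big),
\]
and the finiteness of $\int_0^{\varepsilon_0}\!\int_0^\alpha(|\partial_r v|^2+r^{-2}|\partial_\theta v|^2)\,r\,dr\,d\theta$ forces $n^0\ge 0$, so $v=O(r^{\pi/(2\alpha)})$ and $\partial_r u=O(r^{\pi/(2\alpha)-1})$; the arc integral is then $O(\varepsilon^{\pi/\alpha})\to 0$. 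This explicit expansion is the missing ingredient in your proposal. The hypothesis $\tilde\Lambda f\in L_2(\Gamma_1)$ is used only to make sense of $(\tilde\Lambda f,g)$ as an $L_2$ pairing, not to control the vertex.
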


\begin{proof}
Let $f, g\in \rm{dom}(\tilde{\Lambda})$. Let $v\in\dot{H}_1(G)$ be
the unique solution of the boundary value problem
\begin{equation}
 \left\{\begin{array}{ccl}
  \Delta v& = & 0\,\,\rm{in}\,\, G,\\
v|_{\Gamma_1} & = & f,\\ \frac{\partial v}{\partial\nu}|_{\Gamma_2}
& = & 0
\end{array}\right.
\end{equation}
cf. Theorem $\ref{eumixedbvp}$.\\
Let also $u\in\dot{H}_1(G)$ be
the unique solution of the boundary value problem
\begin{equation}
 \left\{\begin{array}{ccl}
  \Delta u& = & 0\,\,\rm{in}\,\, G,\\
u|_{\Gamma_1} & = & g,\\ \frac{\partial u}{\partial\nu}|_{\Gamma_2}
& = & 0
\end{array}\right.
\end{equation}
cf. Theorem $\ref{eumixedbvp}$.\\
Let $\varepsilon >0,\,N>0$ be arbitrary positive numbers. We will
now apply the first Green formula for the domain
$$G_{\varepsilon N}:=\{(r,\theta):\varepsilon\leq r\leq
N,\,0\leq\theta\leq\alpha\},$$ and for the functions $u$ and $v$.
Let us also denote
$C_{\varepsilon}:=\{(\varepsilon,\theta):0\leq\theta\leq\alpha\}$
and $C_{N}:=\{(N,\theta):0\leq\theta\leq\alpha\}$. We then have

\begin{equation}
\int\!\int_{G_{\varepsilon N}}v\Delta u\, dx_1 dx_2
=-\int\!\int_{G_{\varepsilon N}}\nabla u\nabla v\, dx_1 dx_2
+\int_{C_{\varepsilon}}v\frac{\partial u}{\partial\nu}d\sigma
+\int_{C_{N}}v\frac{\partial u}{\partial\nu}d\sigma
+\int_{\varepsilon}^{N}v\frac{\partial u}{\partial\nu}dx_1
\end{equation}
Since $\Delta u=0$ the last equation becomes
\begin{equation}
\label{greenformula}
 \int\!\int_{G_{\varepsilon
N}}\nabla u\nabla v\, dx_1
dx_2=\int_{C_{\varepsilon}}v\frac{\partial u}{\partial\nu}d\sigma
+\int_{C_{N}}v\frac{\partial u}{\partial\nu}d\sigma
+\int_{\varepsilon}^{N}v\frac{\partial u}{\partial\nu}dx_1
\end{equation}
for every $\varepsilon >0$ and every $N>0$. We are going to prove
that
\begin{equation}
\label{Ntoinfty} \lim_{N\to\infty}\int_{C_{N}}v\frac{\partial
u}{\partial\nu}d\sigma =0.
\end{equation}
and that
\begin{equation}
\label{epstozero}
 \lim_{\varepsilon\to 0}\int_{C_{\varepsilon}}v\frac{\partial u}{\partial\nu}d\sigma =0.
\end{equation}
We now pass to polar coordinates $(r,\theta)$ and perform the standard procedure of separation of variables. We obtain that the general
solutions to the equation $\Delta v=0$ are of the form
$$v(r,\theta)=(A\cos(\sqrt{\lambda}\theta)+ B\sin(\sqrt{\lambda}\theta))(Cr^{\sqrt{\lambda}}+Dr^{-\sqrt{\lambda}})$$

We now exploit the boundary condition on $\Gamma_1$ and $\Gamma_2$.\\
From
\begin{equation}
\label{chtotrig1} \frac{\partial v}{\partial
x_1}=\cos\theta\frac{\partial v}{\partial
r}-\frac{\sin\theta}{r}\frac{\partial
v}{\partial\theta}\end{equation} and
\begin{equation}
\label{chtotrig2} \frac{\partial v}{\partial
x_2}=\sin\theta\frac{\partial v}{\partial
r}+\frac{\cos\theta}{r}\frac{\partial v}{\partial\theta}
\end{equation}
we obtain that
\begin{equation}
\label{normalcondongamma2}
\frac{\partial
v}{\partial\nu}\big|_{\Gamma_2}=-\sin\alpha\frac{\partial
v}{\partial x_1}\big|_{\theta=\alpha}+\cos\alpha\frac{\partial
v}{\partial
x_2}\big|_{\theta=\alpha}=\frac{\sin^{2}\alpha}{r}\frac{\partial
v}{\partial\theta}\big|_{\theta=\alpha}+\frac{\cos^{2}\alpha}{r}\frac{\partial
v}{\partial\theta}\big|_{\theta=\alpha}=\frac{1}{r}\frac{\partial
v}{\partial\theta}\big|_{\theta=\alpha}.
\end{equation}
Since $f\in C_{0}^{\infty}(\Gamma_1)$ there exist $\varepsilon _{0} >0$ and $N_{0}>0$ such that $v\big|_{\Gamma_1}=0$ for $x_1<\varepsilon_{0}$ and $v\big|_{\Gamma_1}=0$ for $x_1>N_{0}$.
From the condition $\frac{\partial v}{\partial\nu}\big|_{\Gamma_2}=0$ we obtain utilizing $\eqref{normalcondongamma2}$ the separated solutions
\begin{equation}
v_n(r,\theta)=B_n\sin
\left(n+\frac{1}{2}\right)\frac{\pi}{\alpha}\theta \left(C_{n}
r^{(n+\frac{1}{2})\frac{\pi}{\alpha}}+D_{n}r^{-(n+\frac{1}{2})\frac{\pi}{\alpha}}\right).
\end{equation}
Since $v\in\dot{H}_1(G)$ it follows that there exists an integer $n_{0}>0$
such that $v$ has the representation
\begin{equation}
\label{vinseppol} v(r,\theta)=\sum_{n=-\infty}^{n_0}A_n
r^{(n+\frac{1}{2})\frac{\pi}{\alpha}}   \sin
\left(n+\frac{1}{2}\right)\frac{\pi}{\alpha}\theta
,\,\,\rm{for}\,\,\,\,r\geq N_{0}
\end{equation}
and $n_0$ is to be determined from the
condition $$\int\int_{G} \vert\nabla u(x_1,x_2)\vert^{2}\,dx_1 dx_2
<\infty.$$
Using relations $\eqref{chtotrig1}$ and
$\eqref{chtotrig2}$ the last condition is written in polar
coordinates as
\begin{equation}
\label{condinrtheta}
\int_{0}^{\alpha}\int_{N_{0}}^{\infty}\left(\left(\frac{\partial
v}{\partial  r}\right)^{2}+\frac{1}{r^{2}}\left(\frac{\partial
v}{\partial \theta}\right)^{2}\right)r\,dr\,d\theta <\infty.
\end{equation}
From the representation $\eqref{vinseppol}$ of $v$ we obtain, using
$\eqref{condinrtheta}$, the condition
$$\int_{N_{0}}^{\infty}r^{2\left[\left(n_{0}+\frac{1}{2}\right)\frac{\pi}{\alpha}-1\right]}\left(1+O\left(\frac{1}{r}\right)\right)r\,dr<\infty,$$
which is satisfied if and only if
$2\left(n_{0}+\frac{1}{2}\right)\frac{\pi}{\alpha}-2+1<-1$, which is
equivalent to $n_{0}<-\frac{1}{2}$. Since $n_{0}$ is an integer we
have that $\eqref{condinrtheta}$ is satisfied if and only if
$n_{0}\leq -1$.\\
In order to prove that
\begin{equation}
\label{Ntoinfty} \lim_{N\to\infty}\int_{C_{N}}v\frac{\partial
u}{\partial\nu}d\sigma =0.
\end{equation}
we note first that  $$\int_{C_{N}}v\frac{\partial
u}{\partial\nu}d\sigma=\int_{0}^{\alpha}v(N,\theta)\frac{\partial
u}{\partial r}(N, \theta)N\,d\theta.$$
Since
\begin{equation}
\label{oform}
\begin{array}{ccl}
v(r,\theta) & = & \sum_{n=-\infty}^{-1}A_n
r^{(n+\frac{1}{2})\frac{\pi}{\alpha}}   \sin
\left(n+\frac{1}{2}\right)\frac{\pi}{\alpha}\theta\\
 & = & r^{-\frac{\pi}{2\alpha}}\left(A_{-1}\sin\left(\frac{-\pi}{2\alpha}\theta\right)+O\left(r^{\frac{-\pi}{\alpha}}
\right)\right)
\end{array}
\end{equation}
and since a formula like $\eqref{oform}$ is true for $u$ it suffices
to show that
$$\lim_{N\to\infty}\int_{0}^{\alpha}N^{-\frac{\pi}{2\alpha}}N^{-\frac{\pi}{2\alpha}-1}N\,d\theta
=0.$$ The last equality is obviously true and therefore the equality
$\eqref{Ntoinfty}$ is proved.\\
Our next task is to prove that
\begin{equation}
\label{epstozero}
 \lim_{\varepsilon\to 0}\int_{C_{\varepsilon}}v\frac{\partial u}{\partial\nu}d\sigma =0.
\end{equation}
First of all note that
\begin{equation}
\label{lineintzero} \int_{C_{\varepsilon}}v\frac{\partial
u}{\partial\nu}d\sigma=\int_{0}^{\alpha}v(\varepsilon,\theta)\frac{\partial
u}{\partial r}(\varepsilon, \theta)\varepsilon\,d\theta
\end{equation}
Using again that $v\in \dot{H}_{1}(G)$ it follows that there exists
an $\varepsilon _{0} >0$ such that $v$ has the representation
\begin{equation}
\label{reprzero} v(r,\theta)=\sum_{n=n^{0}}^{\infty}A_n
r^{(n+\frac{1}{2})\frac{\pi}{\alpha}} \sin
\left(n+\frac{1}{2}\right)\frac{\pi}{\alpha}\theta,\,\,\rm{for}\,\,r\leq\varepsilon
_{0}
\end{equation}
where $n^0$ is some fixed integer which is to be determined from the
condition $$\int\int \vert\nabla u(x_1,x_2)\vert^{2}\,dx_1 dx_2
<\infty,$$ which in polar coordinates is written as
\begin{equation}
\label{condinpolzero}
\int_{0}^{\alpha}\int_{0}^{\varepsilon_{0}}\left(\left(\frac{\partial
v}{\partial r}\right)^{2}+\frac{1}{r^{2}}\left(\frac{\partial
v}{\partial \theta}\right)^{2}\right)r\,dr\,d\theta <\infty.
\end{equation}
From the representation $\eqref{reprzero}$ of $v$ and using
$\eqref{condinpolzero}$ we obtain the condition

$$\int_{0}^{\varepsilon_{0}}r^{2\left[\left(n^{0}+\frac{1}{2}\right)\frac{\pi}{\alpha}-1\right]}\left(1+O\left(r\right)\right)r\,dr<\infty,$$
which is satisfied if and only if
$2\left(n^{0}+\frac{1}{2}\right)\frac{\pi}{\alpha}-2+1>-1$, which is
equivalent to $n^{0}>-\frac{1}{2}$. Since $n^{0}$ is an integer we
have that $\eqref{condinpolzero}$ is satisfied if and only if
$n^{0}\geq 0$. Therefore,
\begin{equation}
\label{oformzero} \begin{array}{ccl} v(r,\theta)& =
&\sum_{n=0}^{\infty}A_n r^{(n+\frac{1}{2})\frac{\pi}{\alpha}} \sin
\left(n+\frac{1}{2}\right)\frac{\pi}{\alpha}\theta\\
 & = & r^{\frac{\pi}{2\alpha}}\left(
 A_{0}\sin\left( \frac{\pi}{2\alpha}\theta\right)+O\left( r^{\frac{\pi}{\alpha}}\right)
 \right),
\end{array}
\end{equation}
and a formula like $\eqref{oformzero}$ is also valid for $u$. In
order to prove $\eqref{epstozero}$ we use $\eqref{lineintzero}$,
$\eqref{oformzero}$ and therefore it suffices to show that
$$\lim_{\varepsilon\to
0}\int_{0}^{\alpha}\varepsilon^{\frac{\pi}{2\alpha}}\varepsilon^{\frac{\pi}{2\alpha}-1}\varepsilon\,d\theta
=0,$$ which is true.\\
Passing to the limit with $\varepsilon\to 0$ and $N\to \infty$ in
the formula $\eqref{greenformula}$ and using $\eqref{epstozero}$ and
$\eqref{Ntoinfty}$ we obtain that
\begin{equation}
\label{lambdag} \int\!\int_{G}\nabla u\nabla
v\,dx_{1}dx_{2}=\int_{\Gamma_1} v\frac{\partial
u}{\partial\nu}dx_{1}.
\end{equation}
Analogously we obtain that
\begin{equation}
\label{lambdaf} \int\!\int_{G}\nabla v\nabla
u\,dx_{1}dx_{2}=\int_{\Gamma_1} u\frac{\partial
v}{\partial\nu}dx_{1}.
\end{equation}
From $\eqref{lambdag}$ and $\eqref{lambdaf}$ we then see that
$(f,\tilde{\Lambda} g)=(g,\tilde{\Lambda} f)$ for every $f,g\in
\rm{dom}(\tilde{\Lambda})$.
\end{proof}
\begin{Cor}
\label{} For every non-zero $f\in\rm{dom}(\tilde{\Lambda})$ we have
that $(\tilde{\Lambda} f,f)> 0$.
\end{Cor}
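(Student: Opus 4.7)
The plan is to specialize the identity from the previous theorem, taking the same function on both sides, and read the corollary off the resulting Dirichlet integral.

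Concretely, given a non-zero $f \in \mathrm{dom}(\tilde{\Lambda})$, let $v \in \dot{H}_1(G)$ denote the unique solution of the mixed problem $\eqref{firstpart}$ with Dirichlet data $f$ on $\Gamma_1$ and zero Neumann data on $\Gamma_2$, as provided by Theorem $\ref{eumixedbvp}$. Since $\tilde\Lambda f = \frac{\partial v}{\partial\nu}|_{\Gamma_1}$ by definition, I would apply the identity $\eqref{lambdag}$ from the proof of the previous theorem with $u = v$. Every step in that derivation (passage to $G_{\varepsilon N}$, the decay of $v$ at infinity coming from $n_0 \leq -1$ in $\eqref{vinseppol}$, and the behavior at the corner coming from $n^0 \geq 0$ in $\eqref{reprzero}$) goes through unchanged with $u=v$, since $f$ has the same regularity on both sides. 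This yields
\begin{equation}
(\tilde{\Lambda} f, f) = \int_{\Gamma_1} f \,\frac{\partial v}{\partial \nu}\, dx_1 = \int\!\!\int_G |\nabla v|^2 \, dx_1\, dx_2 \geq 0.
\end{equation}

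It then remains to rule out the case of equality. If $(\tilde{\Lambda} f, f) = 0$, then $\nabla v \equiv 0$ almost everywhere on $G$, so $v$ is constant on the connected corner domain $G$. Taking the trace on $\Gamma_1$ gives that $f$ is constant on $\Gamma_1$. But $f \in C_0^\infty(\Gamma_1)$ has compact support in the open half-line $\{x_1 > 0\}$, so the only constant value it can take is $0$, contradicting $f \neq 0$. Hence $(\tilde{\Lambda} f, f) > 0$.

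The only real subtlety is the trace step: one must ensure that a constant function representing $v \in \dot{H}_1(G)$ really produces a constant boundary trace equal to $f$. Since $v$ is harmonic, smooth up to the interior of $\Gamma_1$, and $f \in C_0^\infty(\Gamma_1)$ is a classical function, standard trace theory for $H_1$ applies pointwise on $\Gamma_1$, and the argument proceeds without obstruction. Aside from that, the proof is essentially a one-line consequence of the Green identity established for Theorem~3.1.
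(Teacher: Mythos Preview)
Your proof is correct and follows exactly the paper's approach: set $u=v$ in the Green identity $\eqref{lambdaf}$ (equivalently $\eqref{lambdag}$) to obtain $(\tilde\Lambda f,f)=\iint_G|\nabla v|^2\,dx_1dx_2$. The paper's own proof stops at this identity and simply asserts the claim, whereas you go on to justify the strict inequality by arguing that $\nabla v\equiv 0$ forces $v$ constant and hence $f\equiv 0$ via compact support; this extra paragraph is a welcome completion of a step the paper leaves implicit.
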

\begin{proof}
Taking $v=u$ and using $u\big|_{\Gamma_1}=f$, $\frac{\partial
u}{\partial\nu}\big|_{\Gamma_1}=\tilde{\Lambda} f$ and
$\eqref{lambdaf}$ we obtain that $$(\tilde{\Lambda}
f,f)=\int\!\int_{G}\vert\nabla u\vert ^{2}\,dx_{1}dx_{2},$$ which
proves the claim.
\end{proof}
\section{The hyperbolic evolution equation on $\Gamma_{1}$ and the conclusion of the proof of the main theorem}
\label{hypereveqconcl}
\begin{Rem}
\label{Friedextuse}
Since
$\left(H_{-\frac{1}{2}}(\Gamma_1)\right)^{*}=\mathring{H}_{\frac{1}{2}}(\Gamma_1)$
we have that the Friedrichs extension of $\tilde{\Lambda}$ is
exactly the Dirichlet to Neumann operator $\Lambda$ defined by
$\eqref{formulaDN}$. Therefore $\Lambda$ is a positive self-adjoint
operator. The latter fact allows us to show existence and uniqueness
for our initial problem:
\begin{equation}
v_{tt}(x_1,0,t)+g\Lambda v(x_1,0,t)=P_{t}(x_1,0,t),
\end{equation}
with initial conditions
\begin{equation}v(x_1,0,0)=v_{0}(x_1,0),\,\,\,\,v_{t}(x_1,0,0)=P(x_1,0,0)-g\eta_{0}(x_1).\end{equation}
For simplicity, we denote $v_1(x_1):=P(x_1,0,0)-g\eta_{0}(x_1)$ and obtain the following initial problem:

\begin{equation}
\label{inhomogfin} v_{tt}(x_1,t)+g\Lambda
v(x_1,t)=P_{t}(x_1,t),\,\,\,\,\,v(x_1,0)=v_{0},\,\,\,\,\,v_{t}(x_1,0)=v_1.
\end{equation}
\end{Rem}
\begin{Lem}
The solution of the homogeneous problem
\begin{equation}
\label{homogfin} v_{tt}(x_1,t)+g\Lambda
v(x_1,t)=0,\,\,\,\,\,v(x_1,0)=v_{0},\,\,\,\,\,v_{t}(x_1,0)=v_1,
\end{equation}
is given by the formula
\begin{equation}
\label{formzeid}
v(x_1,t)=\cos(t\Lambda^{\frac{1}{2}})v_{0}+\underline{\Lambda}
^{-\frac{1}{2}}\sin(t\underline{\Lambda}^{\frac{1}{2}})v_{1},
\end{equation}
where $\underline{\Lambda}=g\Lambda$.
\end{Lem}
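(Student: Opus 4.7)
The plan is to treat this as a standard abstract second-order Cauchy problem on the Hilbert space $L_2(\Gamma_1)$ driven by the positive self-adjoint operator $\underline{\Lambda}=g\Lambda$, and to exploit the functional calculus that the spectral theorem makes available. Concretely, since by Remark \ref{Friedextuse} the operator $\Lambda$ (and hence $\underline{\Lambda}$) is positive and self-adjoint, the spectral theorem gives a projection-valued measure $E_\mu$ supported on $[0,\infty)$ with $\underline{\Lambda}=\int_0^\infty \mu\, dE_\mu$. The operators $\cos(t\underline{\Lambda}^{\frac{1}{2}})=\int_0^\infty \cos(t\sqrt{\mu})\, dE_\mu$ and $\underline{\Lambda}^{-\frac{1}{2}}\sin(t\underline{\Lambda}^{\frac{1}{2}})=\int_0^\infty \frac{\sin(t\sqrt{\mu})}{\sqrt{\mu}}\, dE_\mu$ are then bounded on $L_2(\Gamma_1)$ (the second because $s\mapsto\sin(ts)/s$ extends continuously by $t$ at $s=0$ and is bounded by $t$), so formula \eqref{formzeid} defines an element $v(\cdot,t)\in L_2(\Gamma_1)$ for every $t$.

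Next I would verify that this $v$ solves the problem. Working at the level of spectral resolutions, for every $v_0\in\mathrm{dom}(\Lambda)$ and $v_1\in\mathrm{dom}(\Lambda^{\frac{1}{2}})$ the function $t\mapsto v(\cdot,t)$ is $C^2$ into $L_2(\Gamma_1)$ with
\begin{equation*}
v_t=-\underline{\Lambda}^{\frac{1}{2}}\sin(t\underline{\Lambda}^{\frac{1}{2}})v_0+\cos(t\underline{\Lambda}^{\frac{1}{2}})v_1,\qquad v_{tt}=-\underline{\Lambda}\cos(t\underline{\Lambda}^{\frac{1}{2}})v_0-\underline{\Lambda}^{\frac{1}{2}}\sin(t\underline{\Lambda}^{\frac{1}{2}})v_1,
\end{equation*}
so that $v_{tt}+\underline{\Lambda}v=0$. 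Setting $t=0$ in $v$ and $v_t$ one reads off $v(\cdot,0)=v_0$ and $v_t(\cdot,0)=v_1$, giving existence. Density of the smooth spectral domain lets one extend the identity to $v_0,v_1$ in the natural energy spaces.

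For uniqueness I would use the standard energy functional $E(t)=\tfrac{1}{2}\|v_t\|_{L_2(\Gamma_1)}^2+\tfrac{1}{2}(\underline{\Lambda}v,v)$, whose second term is non-negative by the positivity of $\underline{\Lambda}$ (Corollary to the symmetry theorem in Section \ref{dnselfadj}). Differentiating in $t$ and using self-adjointness of $\underline{\Lambda}$ gives $E'(t)=(v_{tt}+\underline{\Lambda}v,v_t)=0$, so $E(t)\equiv E(0)=0$ for any solution with zero initial data, which forces $v_t\equiv 0$ and $(\underline{\Lambda}v,v)\equiv 0$; positivity then yields $v\equiv 0$.

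The step I expect to require the most care is not the calculus, which is essentially automatic once the functional calculus is in place, but the domain bookkeeping: to differentiate $\cos(t\underline{\Lambda}^{\frac{1}{2}})v_0$ twice in $L_2(\Gamma_1)$ one needs $v_0\in\mathrm{dom}(\underline{\Lambda})$, and similarly $v_1\in\mathrm{dom}(\underline{\Lambda}^{\frac{1}{2}})$ for the second term; for general initial data in larger energy spaces one interprets \eqref{formzeid} as a mild solution and recovers the equation in a weaker sense. Keeping track of which Sobolev norms on $\Gamma_1$ correspond to $\mathrm{dom}(\underline{\Lambda}^{\frac{1}{2}})$ and $\mathrm{dom}(\underline{\Lambda})$, so that the output regularity matches the classes $C([0,T],H_{\frac{1}{2}}(\Gamma_1))$ and $C([0,T],L_2(\Gamma_1))$ demanded by Theorem \ref{mainthm}, will be the only delicate point.
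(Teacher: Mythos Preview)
Your proposal is correct and is essentially the standard argument that the paper is implicitly invoking; however, the paper does not give any proof at all and simply refers the reader to \cite{Zei}, pp.~309, where this abstract Cauchy problem for a positive self-adjoint operator is treated. What you have written is precisely the spectral-theorem/energy-method argument one finds behind that citation, so there is no genuine difference in approach---you have just unpacked what the paper leaves to the reference.
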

\begin{proof}
See for instance \cite{Zei}, pp. 309.
\end{proof}
For each $s\in\R$ let now $u(x_1,t;s)$ be the
solution of
$$u_{tt}+\underline{\Lambda}
u=0,\,\,\,u(x_1,0;s)=0,\,\,\,u_t(x_1,0;s)=P_{t}(x_1,0;s).$$
From $\eqref{formzeid}$ it follows that
\begin{equation}
\label{uoftands}
u(x_1,t,s)=\underline{\Lambda}^{-\frac{1}{2}}\sin(t\underline{\Lambda}^{\frac{1}{2}})P_{t}(x_1,0;s).
\end{equation}
We then have the following
\begin{Lem}
\label{Duhamel}
The function defined by $v(x_1,t)=\int_{0}^{t}u(x_1,t-s;s)\,ds$
satisfies the boundary value problem
\begin{equation}
\label{intinhomog} v_{tt}+g\underline{\Lambda}
v=P_{t}(x_1,t)\,\,\,\,v(x_1,0)=0,\,\,\,\,v_{t}(x_1,0)=0.
\end{equation}
\end{Lem}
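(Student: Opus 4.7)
The plan is Duhamel's principle: by construction each slice $u(\cdot,\cdot;s)$ solves a homogeneous evolution equation with initial velocity $P_t(\cdot,0;s)$, so differentiating the time integral defining $v$ will reproduce the inhomogeneity as a boundary term coming from the upper endpoint, while the remaining integral is absorbed into the operator part.

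First, I would apply the Leibniz rule to obtain
\[
v_t(x_1,t)=u(x_1,0;t)+\int_0^t u_t(x_1,t-s;s)\,ds=\int_0^t u_t(x_1,t-s;s)\,ds,
\]
the boundary term vanishing because $u(x_1,0;s)=0$ is the first initial condition defining $u$. The initial data $v(x_1,0)=0$ and $v_t(x_1,0)=0$ follow directly from the empty integration interval.

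Differentiating once more gives
\[
v_{tt}(x_1,t)=u_t(x_1,0;t)+\int_0^t u_{tt}(x_1,t-s;s)\,ds=P_t(x_1,0;t)+\int_0^t u_{tt}(x_1,t-s;s)\,ds,
\]
where the second equality uses the remaining initial condition $u_t(x_1,0;s)=P_t(x_1,0;s)$. Since for each fixed $s$ the function $u(\cdot,\cdot;s)$ satisfies $u_{tt}+\underline{\Lambda}u=0$, substituting $u_{tt}=-\underline{\Lambda}u$ and pulling the closed operator $\underline{\Lambda}$ outside the $s$-integral produces
\[
v_{tt}(x_1,t)+\underline{\Lambda}v(x_1,t)=P_t(x_1,0;t),
\]
which is the desired equation (recalling the convention $\underline{\Lambda}=g\Lambda$).

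The main obstacle is purely technical: justifying the Leibniz rule and the commutation of the unbounded operator $\underline{\Lambda}$ with the $s$-integral. I would handle both at once by invoking the explicit formula \eqref{uoftands}, $u(x_1,t;s)=\underline{\Lambda}^{-1/2}\sin(t\underline{\Lambda}^{1/2})P_t(x_1,0;s)$, together with the spectral calculus for the positive self-adjoint operator $\underline{\Lambda}$ established in Section \ref{dnselfadj}. The families $\cos(t\underline{\Lambda}^{1/2})$ and $\underline{\Lambda}^{-1/2}\sin(t\underline{\Lambda}^{1/2})$ are strongly continuous in $t$ on the relevant fractional Sobolev scale, and the hypothesis $P_t(x_1,0,\cdot)\in L_1([0,T],L_2(\Gamma_1))$ from Theorem \ref{mainthm} supplies an integrable majorant. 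Dominated convergence then legitimizes the Leibniz differentiation, and the closedness of $\underline{\Lambda}$ legitimizes pulling it through the integral, making the formal computation above rigorous.
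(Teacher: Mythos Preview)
Your argument is essentially identical to the paper's: both apply the Leibniz rule twice, use $u(x_1,0;s)=0$ and $u_t(x_1,0;s)=P_t(x_1,0;s)$ for the boundary terms, and then substitute $u_{tt}=-\underline{\Lambda}u$ inside the integral. Your closing paragraph justifying the exchange of $\underline{\Lambda}$ with the integral and the Leibniz differentiation via the spectral representation actually goes beyond the paper, which leaves this formal; note also that the extra factor $g$ in the displayed equation \eqref{intinhomog} is a typo in the paper (since $u_{tt}+\underline{\Lambda}u=0$ with $\underline{\Lambda}=g\Lambda$), and your version $v_{tt}+\underline{\Lambda}v=P_t$ is the correct one.
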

\begin{proof}
Clearly $v(x_1,0)=0$. We also have
$$v_{t}(x_1,t)=u(x_1,0;t)+\int_{0}^{t}u_{t}(x_1,t-s;s)\,ds=\int_{0}^{t}u_{t}(x_1,t-s;s)\,ds,$$
which implies that $v_{t}(x_1,0)=0$. Finally, differentiating once
more in $t$ we obtain
\begin{equation}
\begin{split}
v_{tt}(x_1,t)& =u_{t}(x_1,0;t)+\int_{0}^{t}u_{tt}(x_1,t-s;s)\,ds\\
&=P_{t}(x_1,t)+\int_{0}^{t}-g\underline{\Lambda}
u(x_1,t-s;s)\,ds=P_{t}(x_1,t)-g\underline{\Lambda} v(x_1,t),
\end{split}
\end{equation}
which proves $\eqref{intinhomog}$.
\end{proof}
\begin{Cor}
\label{represformforv}
The solution of the problem $\eqref{inhomogfin}$ is given by the following formula
$$v(x_1,t)=\int_{0}^{t}\underline{\Lambda}^{-\frac{1}{2}}\sin((t-s)\underline{\Lambda}^{\frac{1}{2}})P_{t}(x_1,0;s)\,ds +\cos(t\underline{\Lambda}^{\frac{1}{2}})v_{0}+\underline{\Lambda}
^{-\frac{1}{2}}\sin(t\underline{\Lambda}^{\frac{1}{2}})v_{1}.$$
\end{Cor}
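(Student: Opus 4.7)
The plan is to invoke linearity of the evolution equation $v_{tt}+g\Lambda v=P_t$ and decompose the problem into a homogeneous part carrying the initial data and a particular part carrying the forcing term. Let $v_h(x_1,t)$ denote the solution of the homogeneous Cauchy problem \eqref{homogfin} with the prescribed initial data $(v_0,v_1)$, and let $v_p(x_1,t)$ denote the solution of the inhomogeneous problem \eqref{intinhomog} with zero initial data. Then $v:=v_h+v_p$ is immediately verified to satisfy \eqref{inhomogfin}: the equation is satisfied by linearity of $\partial_t^2+g\Lambda$, the initial position $v(x_1,0)=v_h(x_1,0)+v_p(x_1,0)=v_0+0=v_0$ is correct, and similarly $v_t(x_1,0)=v_1+0=v_1$.

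Next I would read off the explicit form of each piece from the two preceding Lemmas. Formula \eqref{formzeid} yields directly
\[
v_h(x_1,t)=\cos(t\underline{\Lambda}^{\frac{1}{2}})v_{0}+\underline{\Lambda}^{-\frac{1}{2}}\sin(t\underline{\Lambda}^{\frac{1}{2}})v_{1}.
\]
For the particular part, Lemma \ref{Duhamel} identifies $v_p$ with $\int_{0}^{t}u(x_1,t-s;s)\,ds$, and \eqref{uoftands} gives $u(x_1,t-s;s)=\underline{\Lambda}^{-\frac{1}{2}}\sin((t-s)\underline{\Lambda}^{\frac{1}{2}})P_{t}(x_1,0;s)$, so
\[
v_p(x_1,t)=\int_{0}^{t}\underline{\Lambda}^{-\frac{1}{2}}\sin((t-s)\underline{\Lambda}^{\frac{1}{2}})P_{t}(x_1,0;s)\,ds.
\]
Adding the two expressions yields the claimed formula.

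There is really no substantive obstacle: the corollary is the standard Duhamel decomposition, and all the analytic work (self-adjointness and positivity of $\Lambda$, so that $\underline{\Lambda}^{\pm 1/2}$ and the spectral functions $\cos(t\underline{\Lambda}^{1/2})$, $\sin(t\underline{\Lambda}^{1/2})$ are defined via the functional calculus) has already been carried out. The only point worth a brief remark is that the operators $\cos(t\underline{\Lambda}^{1/2})$ and $\underline{\Lambda}^{-1/2}\sin(t\underline{\Lambda}^{1/2})$ commute with each other and with $\underline{\Lambda}$, so the sum $v_h+v_p$ behaves well under the $\partial_t^2+g\Lambda$ operator and regularity in $t$ of the data $P_t(\cdot,0;s)$ is preserved through the integral—these are the only technical points I would check before concluding.
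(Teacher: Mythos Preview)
Your argument is correct and follows essentially the same route as the paper's proof: you superpose the homogeneous solution \eqref{formzeid} with the Duhamel particular solution from Lemma~\ref{Duhamel} and \eqref{uoftands}, which is exactly what the paper does in one line. The extra remarks about commutation and regularity are harmless but not needed for the corollary itself.
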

\begin{proof}
Adding up the solutions to the problems $\eqref{homogfin}$ and
$\eqref{intinhomog}$  and taking into account formula $\eqref{uoftands}$ we obtain the assertion.
\end{proof}
Remark $\ref{Friedextuse}$ and Lemma $\ref{Duhamel}$ allow us to conclude the proof of the main Theorem $\ref{mainthm}$. We restate it here for convenience.
\begin{Thm}
For any $T>0$ and for any $P(x_1,x_2,t)$ such that $P(x_1,0,t)\in C([0,T],L_{2}(\Gamma_{1}))$ and $P_{t}(x_1,0,t)\in L_{1}([0,T],L_{2}(\Gamma_{1}))$ there exist unique $v(x_1,x_2,t)\in C([0,T], \dot{H}_{1}(G))$ and $\eta(x_1,t)\in C([0,T],L_{2}(\Gamma_1))$ such that $v(x_1,0,t)\in C([0,T],H_{\frac{1}{2}}(\Gamma_1)), v_{t}(x_1,0,t)\in C([0,T],L_{2}(\Gamma_1)), \eta_t\in C([0,T],H_{-\frac{1}{2}}(\Gamma_1))$ which satisfy the boundary value problem
\begin{equation}
\label{finalbvp}
\left\{\begin{array}{ccr} \Delta v(x_1,x_2,t) & = & 0,\,\,\,\, (x_1,x_2)\in G\,\,\rm{and}\,t\geq 0\\
g\eta (x_1,t)+ v_t (x_1,0,t) & = & P(x_1,0,t),\,\,\,\, (x_1,0)\in \Gamma_1\,\,\rm{and}\,t\geq 0\\
\eta_t (x_1,t)-v_{x_2}(x_1,0,t) & = & 0,\,\,\,\, (x_1,0)\in \Gamma_1\,\,\rm{and}\,t\geq 0,
\end{array}\right.
\end{equation}
with the initial conditions
\begin{equation}
\left\{\begin{array}{ccl}\eta (x_1,0)  & = & \eta_0(x_1)\\
v(x_1,x_2,0) & = & v_0(x_1,x_2)\end{array}\right.
\end{equation}
where $\eta_{0}\in L_{2}(\Gamma_1)$ and $v_{0}\in \dot{H}_{1}(G)$.
\end{Thm}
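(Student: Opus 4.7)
The plan is to reduce the coupled system to the single abstract Cauchy problem \eqref{inhomogfin} on $\Gamma_1$, solve it using the spectral calculus associated with the self-adjoint positive operator $\Lambda$, and then reconstruct $v$ in $G$ and $\eta$ on $\Gamma_1$ from the boundary trace.

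First, by the elimination carried out just before \eqref{etaelim}, any solution of the coupled system yields, on $\Gamma_1$, a solution of
\begin{equation*}
v_{tt}(x_1,t)+g\Lambda v(x_1,t)=P_t(x_1,t),\qquad v(x_1,0)=v_0|_{\Gamma_1},\qquad v_t(x_1,0)=P(\cdot,0,0)-g\eta_0,
\end{equation*}
and conversely, given a boundary function $v(x_1,0,t)$ of sufficient regularity, Theorem \ref{eumixedbvp} lifts it at each fixed $t$ to a harmonic function $v(x_1,x_2,t)\in\dot H_1(G)$ satisfying $\partial_\nu v|_{\Gamma_2}=0$, and then $\eta:=(P-v_t)/g$ automatically satisfies the kinematic condition $\eta_t-v_{x_2}=0$ by the equivalence noted in the text. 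Hence it suffices to solve the abstract Cauchy problem on $\Gamma_1$ and check the claimed regularities.

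For the abstract problem I would use Remark \ref{Friedextuse}: $\Lambda$ is a positive self-adjoint operator on $L_2(\Gamma_1)$, so the functional calculus furnishes the unitary groups $\cos(t\underline\Lambda^{1/2})$ and $\underline\Lambda^{-1/2}\sin(t\underline\Lambda^{1/2})$ (with $\underline\Lambda=g\Lambda$). Corollary \ref{represformforv} gives the explicit Duhamel representation
\begin{equation*}
v(x_1,t)=\cos\bigl(t\underline\Lambda^{1/2}\bigr)v_0|_{\Gamma_1}+\underline\Lambda^{-1/2}\sin\bigl(t\underline\Lambda^{1/2}\bigr)v_1+\int_0^t\underline\Lambda^{-1/2}\sin\bigl((t-s)\underline\Lambda^{1/2}\bigr)P_t(\cdot,0,s)\,ds.
\end{equation*}
Uniqueness of this solution is standard: the difference of two solutions satisfies the homogeneous equation with zero data, and pairing with $v_t$ yields conservation of the energy $\|v_t\|_{L_2(\Gamma_1)}^2+g\|\Lambda^{1/2}v\|_{L_2(\Gamma_1)}^2$, forcing $v\equiv 0$.

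The key step, and the one I expect to be the main obstacle, is matching the abstract spectral regularity produced by this formula with the concrete Sobolev regularity stated in the theorem. Concretely, I would identify $\mathrm{dom}(\Lambda^{1/2})$ with $\mathring H_{1/2}(\Gamma_1)$ and its dual graph norm with the $H_{-1/2}(\Gamma_1)$ norm, using the fact that $\Lambda:\mathring H_{1/2}(\Gamma_1)\to H_{-1/2}(\Gamma_1)$ is bounded together with the duality $(H_{-1/2})^*=\mathring H_{1/2}$ exploited in Remark \ref{Friedextuse}. Granted that identification, the hypothesis $P(\cdot,0,\cdot)\in C([0,T],L_2(\Gamma_1))$ together with $P_t(\cdot,0,\cdot)\in L_1([0,T],L_2(\Gamma_1))$, and the initial data $v_0|_{\Gamma_1}\in H_{1/2}(\Gamma_1)$, $v_1\in L_2(\Gamma_1)$, force $v(\cdot,0,\cdot)\in C([0,T],H_{1/2}(\Gamma_1))$ and $v_t(\cdot,0,\cdot)\in C([0,T],L_2(\Gamma_1))$ by standard continuity of the cosine/sine families; applying $\Lambda$ (which lands in $H_{-1/2}$) and using the equation yields $\eta_t=v_{x_2}|_{\Gamma_1}=\Lambda v\in C([0,T],H_{-1/2}(\Gamma_1))$, while $\eta=(P-v_t)/g\in C([0,T],L_2(\Gamma_1))$. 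Finally, solving the elliptic problem \eqref{firstpart} at each $t$ with Dirichlet datum $v(\cdot,0,t)\in\mathring H_{1/2}(\Gamma_1)$ produces $v(\cdot,\cdot,t)\in\dot H_1(G)$ continuously in $t$ by the bounded dependence built into Theorem \ref{eumixedbvp}, which completes the existence part; uniqueness in $G$ follows because any other solution would have the same boundary trace and thus coincide by the uniqueness clause of that theorem.
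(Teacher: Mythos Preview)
Your proposal is correct and follows essentially the same route as the paper: reduce to the abstract evolution equation on $\Gamma_1$, invoke the Duhamel formula of Corollary~\ref{represformforv}, and read off the regularities via the identification $\|\underline\Lambda^{1/2}v\|_{L_2(\Gamma_1)}\simeq\|v\|_{H_{1/2}(\Gamma_1)}$, then recover $\eta$ from the boundary relations. If anything, you are more explicit than the paper about the uniqueness argument, the lifting to $\dot H_1(G)$ via Theorem~\ref{eumixedbvp}, and the identification $\mathrm{dom}(\Lambda^{1/2})=\mathring H_{1/2}(\Gamma_1)$, all of which the paper either uses tacitly or dispatches in a single line.
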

\begin{proof}
We first prove the assertion about $v$. From Corollary $\ref{represformforv}$ we have that
$$\underline{\Lambda} ^{\frac{1}{2}}v(x_1,0,t)=\int_{0}^{t}\sin ((t-s)\underline{\Lambda}^{\frac{1}{2}})P_{t}(x_1,0;s)\,ds+\cos(t\underline{\Lambda}^{\frac{1}{2}})\underline{\Lambda}^{\frac{1}{2}}v_{0}+
\sin(t\underline{\Lambda}^{\frac{1}{2}})v_{1}.$$
Therefore we have that $$\max_{0\leq t\leq T}\vert\vert\underline{\Lambda}^{\frac{1}{2}}v\vert\vert_{0}\leq C\left(\int_{0}^{T}\vert\vert P_{t}\vert\vert_{0}\,dt +\vert\vert v_{0}\vert\vert_{\frac{1}{2}}+\vert\vert v_{1}\vert\vert_{0}\right),$$
where $C$ is a constant.
Since $\vert\vert\underline{\Lambda}^{\frac{1}{2}}v\vert\vert_{0}=\vert\vert v(x_1,0,t)\vert\vert_{\frac{1}{2}}$ it follows that $v\in C([0,T],H_{\frac{1}{2}}(\Gamma_1))$.\\
Using again Corollary $\ref{represformforv}$ we have that
$$v_t(x_1,0,t)=\int_{0}^{t}\left(\cos (t-s)\underline{\Lambda} ^{\frac{1}{2}}\right)P_{t}(x_1,0;s)\,ds+\underline{\Lambda}^{\frac{1}{2}}\sin (t\underline{\Lambda}^{\frac{1}{2}})v_0+\cos (t\underline{\Lambda}^{\frac{1}{2}})v_1,$$
from which we obtain that
$$\max_{0\leq t\leq T}\vert\vert v_{t}\vert\vert_{0}\leq \tilde{C}\left(\int_{0}^{T}\vert\vert P_{t}\vert\vert\,ds+\vert\vert v_{0}\vert\vert_{\frac{1}{2}}+\vert\vert v_{1}\vert\vert_{0}\right),$$
where $\tilde{C}$ is a constant. This shows that $v_t(x_1,0,t)\in C([0,T],L_{2}(\Gamma_1))$.
The assertions about $\eta$ follow from the conditions on $\Gamma_1$ in $\eqref{finalbvp}$.
\end{proof}
\section*{Acknowledgment}
The author would like to thank prof. Gregory Eskin, for pointing him
out this problem and for the numerous and inspiring conversations
during the writing of this paper.

\end{document}